\documentclass[12pt]{amsart}
\usepackage[active]{srcltx}
\usepackage{a4wide}
\usepackage{amsthm,amsfonts,amsmath,mathrsfs,amssymb}
\usepackage{dsfont}
\usepackage[T1]{fontenc}
\usepackage[utf8]{inputenc}
\usepackage{fourier}
\linespread{1.5}

\newcommand{\D}{\mathbb{D}}

\newcommand{\T}{\mathbb{T}}

\newcommand{\Hp}{{\mathscr{H}}^q}

\newcommand{\Real}{\operatorname{Re}}

\newcommand{\norm}[1]{\left\|#1\right\|}
\newtheorem{theorem}{Theorem}%[section]

\newtheorem{lemma}{Lemma}%[section]

\begin{document}

\title[An inequality of Hardy--Littlewood type  for Dirichlet polynomials]{An inequality of Hardy--Littlewood type for Dirichlet polynomials }
%\\ and the $L^q$ norm of $\sum_{n\le N} n^{-1/2-it}$

\begin{abstract}
The $L^q$ norm of a Dirichlet polynomial $F(s)=\sum_{n=1}^{N} a_n n^{-s}$ is defined as
\[\| F\|_q:=\left(\lim_{T\to\infty}\frac{1}{T}\int_{0}^T |F(it)|^qdt\right)^{1/q}\]
for $0<q<\infty$. It is shown that
\[ \left(\sum_{n=1}^{N} |a_n|^2|\mu(n)|[d(n)]^{\frac{\log q}{\log 2} -1}\right)^{1/2}\le \| F\|_q \]
when $0<q<2$; here $\mu$ is the M\"{o}bius function and $d$ the divisor function. This result is used to prove that the $L^q$ norm of $D_N(s):=\sum_{n=1}^{N} n^{-1/2-s}$ satisfies $\|D_N\|_q\gg (\log N)^{q/4}$ for $0<q<\infty$. By Helson's generalization of the M. Riesz theorem on the conjugation operator, the reverse inequality $\|D_N\|_q \ll (\log N)^{q/4}$ is  shown to be valid in the range $1<q<\infty$. Similar bounds are found for a fairly large class of Dirichlet series including, on one of Selberg's conjectures, the Selberg class of $L$-functions.
\end{abstract}

%\subjclass[2000]{32A05, 43A46}

%\author[K. Seip]{Kristian Seip}
%\address{Department of Mathematical Sciences, Norwegian University of Science and Technology,
%NO-7491 Trondheim, Norway} \email{seip@math.ntnu.no}
%\subjclass[2010]{47B33, 30B50, 30H10.}
%\keywords{Dirichlet series, boundary behaviour}
%\date{November 14, 2007.}

% ----------------------------------------------------------------------
\author{Andriy Bondarenko}
\address{Department of Mathematical Analysis\\ Taras Shevchenko National University of Kyiv\\
Volody- myrska 64\\ 01033 Kyiv\\ Ukraine}
\address{Department of Mathematical Sciences \\ Norwegian University of Science and Technology \\ NO-7491 Trondheim \\ Norway}

\email{andriybond@gmail.com}

\author[Winston Heap]{Winston Heap}
\address{Department of Mathematical Sciences, Norwegian University of Science and Technology,
NO-7491 Trondheim, Norway} \email{winstonheap@gmail.com}

\author[Kristian Seip]{Kristian Seip}
\address{Department of Mathematical Sciences \\ Norwegian University of Science and Technology \\ NO-7491 Trondheim \\ Norway}
\email{seip@math.ntnu.no}
\thanks{Research supported by Grant 227768 of the Research Council of Norway. }
\subjclass[2010]{11M99, 32A70, 42B30}

%\date{\today}

\maketitle

\section{Introduction}
This paper estimates $L^q$ norms of Dirichlet polynomials $F(s)=\sum_{n=1}^{N} a_n n^{-s}$,  defined as
\[\| F\|_q:=\left(\lim_{T\to\infty}\frac{1}{T}\int_{0}^T |F(it)|^qdt\right)^{1/q}\]
for $0<q<\infty$.  We will establish a basic inequality for this norm which has its origin in certain inequalities on the unit circle studied by Hardy and Littlewood and many other authors. We will use this inequality to obtain lower bounds for $L^q$ norms of partial sums of Dirichlet series whose coefficients are multiplicative arithmetic functions $a(n)$ satisfying one or both of the following two conditions:
\begin{itemize}
\item[(A)] There exist two constants $C>0$ and $\theta<1/4$ such that $|a(p^m)|\le C p^{\theta m}$ for all primes $p$ and positive integers $m$.
\item[(B)] There exists a constant $\theta<1/2$ such that $a(p)\ll p^{\theta}$ for all primes $p$ and, for every real number $r$, we have
\begin{equation}\label{normbelow} \prod_{p \le x} (1+2^r |a(p)|^2p^{-1}) \ll_r \sum_{n\le x} |\mu(n)| |a(n)|^2 n^{-1} [d(n)]^r, \end{equation}
\end{itemize}
where $\mu(n)$ is the M\"{o}bius function. We see that condition (A) is a mild growth condition, while the rationale for the more subtle condition (B) will become clear in the light of our basic inequality.  
A simple argument (see the remark after the proof of Lemma~\ref{square} below) shows that (B) implies that
\begin{equation}\label{conseq} \lambda_{a}(x):= \sum_{p\le x}\frac{|a(p)|^2}{p}\ll \log x.\end{equation}
Conversely, we will show in Section~\ref{ramanujan} that \eqref{normbelow} is indeed satisfied whenever
\begin{equation}\label{selberg} \lambda_{a}(x)=c \log\log x +O(1). \end{equation}
This means that  the constant sequence $a(n)\equiv 1$ satisfies (B). More generally, we note that, on one of Selberg's conjectures \cite{S2}, any function $a(n)$ representing the coefficients of an $L$-function in the Selberg class meets (B); condition (A) is  trivially satisfied by such an $a(n)$ in view of the definition of the Selberg class.

%In what follows, we will use the notation
%\[=\sum_{p_j\le x}\frac{ |\alpha(p_j)|^2}{p_j}.\]
\begin{theorem}\label{thm1}
Suppose $a(n)$ is a multiplicative arithmetic function, and set
\[D_N(s)=\sum_{n=1}^Na(n)n^{-1/2-s}.\]
If $a(n)$ satisfies $\operatorname{(A)}$, then
\begin{equation} \label{above}\|D_N\|_q\ll_q \begin{cases} e^{q\lambda_{a}(N)/4}, & q>1 \\
\lambda_a(N) e^{\lambda_{a}(N)/4}, & q=1 \\
e^{\lambda_{a}(N)/4}, & 0<q<1.
\end{cases} \end{equation}
On the other hand, if $a(n)$ satisfies $\operatorname{(B)}$, then
\begin{equation}\label{below} \| D_N\|_q\gg_q e^{q\lambda_{a}(N)/4} \end{equation}
for every $0< q <\infty$. %The implied constants depend only on $p$.
\end{theorem}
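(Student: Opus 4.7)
The plan is to prove the lower bound \eqref{below} by applying the Hardy--Littlewood type inequality from the abstract to $D_N^k$ for a suitably chosen integer $k$, and to prove the upper bound \eqref{above} by reducing $D_N$ to a truncated Euler product and invoking Helson's generalization of the M.~Riesz conjugation theorem.

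For \eqref{below}, given $q>0$ I fix an integer $k$ with $k>q/2$, so that $q/k\in (0,2)$ is admissible in the basic inequality. The Dirichlet polynomial $F=D_N^k$ has length at most $N^k$ and satisfies $\|F\|_{q/k}=\|D_N\|_q^k$. Applying the basic inequality to $F$ yields
\[ \|D_N\|_q^{2k}\geq \sum_n |\mu(n)|\,|b(n)|^2\,[d(n)]^{\log(q/k)/\log 2\, -\,1}, \]
where $b(n)$ is the $n$-th Dirichlet coefficient of $F$. For squarefree $n\le N$, multiplicativity of $a$ together with the combinatorial count of distributing the $\omega(n)$ distinct prime divisors of $n$ among the $k$ copies of $D_N$ gives $b(n)=k^{\omega(n)} a(n) n^{-1/2}$, while $d(n)=2^{\omega(n)}$. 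Restricting the sum to squarefree $n\le N$ and simplifying, the summand becomes $(kq/2)^{\omega(n)} |a(n)|^2/n$, so the whole sum is at least
\[ \sum_{\substack{n\le N\\ \mu(n)\neq 0}}\frac{|a(n)|^2}{n}\Bigl(\frac{kq}{2}\Bigr)^{\omega(n)}=\sum_{n\le N}\frac{|\mu(n)|\,|a(n)|^2}{n}\,[d(n)]^{\log_2(kq/2)}. \]
Condition (B) with parameter $r'=\log_2(kq/2)$ dominates the right-hand side below by $\prod_{p\le N}(1+(kq/2)|a(p)|^2/p)$, and the hypothesis $a(p)\ll p^\theta$ with $\theta<1/2$, combined with the consequence $\lambda_a(N)\ll \log N$ of (B), makes the logarithm of the product equal to $(kq/2)\lambda_a(N)(1+o(1))+O_k(1)$. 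Extracting the $2k$-th root recovers $\|D_N\|_q\gg_q \exp(q\lambda_a(N)/4)$.

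For \eqref{above}, I would compare $D_N$ to the truncated Euler product $E_N(s)=\prod_{p\le N}\sum_{m\ge 0}a(p^m) p^{-m(1/2+s)}$. Under (A) with $\theta<1/4$, the difference $E_N-D_N$ consists of contributions from $n$ with $P^+(n)\le N$ and $n>N$, which is bounded in $L^2$ by a convergent Euler product. The $L^q$ norm of $E_N$ factors under the Bohr lift as $\prod_{p\le N}\bigl(\int_{\mathbb{T}}|\sum_{m\ge 0}a(p^m) z^m p^{-m/2}|^q\,dz\bigr)^{1/q}$; a Taylor expansion of each local factor in powers of $p^{-1/2}$ shows that the integrand equals $1+(q/2)^2|a(p)|^2/p + O(p^{-\min(3/2-3\theta,\,1)})$, so that $\|E_N\|_q\ll \exp(q\lambda_a(N)/4)$. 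The step that transfers this bound to $\|D_N\|_q$ is where the range of $q$ enters: for $q>1$ it rests on Helson's generalization of M.~Riesz's theorem applied to the partial-sum projection in the Bohr--Dirichlet correspondence; at $q=1$ the standard $L\log L$ deficiency of the conjugation operator explains the extra factor $\lambda_a(N)$; for $0<q<1$ a direct argument (bypassing conjugation and using $\|D_N\|_q\le \|D_N\|_2$ combined with the Euler product computation) suffices.

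The decisive step, and the main obstacle, is the bootstrap via $D_N^k$. Condition (B) was crafted precisely so that the shift from the weight exponent $r=\log_2(q/k)-1$ in the basic inequality to $r'=\log_2(kq/2)$, dictated by the coefficient combinatorics of $D_N^k$, produces after the $2k$-th root the rate $q\lambda_a(N)/4$ \emph{independent} of the auxiliary integer $k$. Verifying that the combinatorial factor $k^{2\omega(n)}$ arising in $|b(n)|^2$ cooperates with the shift $d(n)^r\to d(n)^{r'}$ to yield a clean $(kq/2)^{\omega(n)}$ is the pivotal calculation; this is what both forces the exact form of the exponent $(\log q/\log 2)-1$ in the basic inequality and explains the precise appearance of $2^r$ in condition (B).
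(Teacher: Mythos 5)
Your lower-bound argument is essentially the paper's, cleanly unified: the paper handles $0<q<2$ with $k=1$ and $q\ge 2$ with $k=2^j$ where $2^j\le q<2^{j+1}$, whereas you take any integer $k>q/2$; the coefficient computation $b(n)=k^{\omega(n)}a(n)n^{-1/2}$ on squarefree $n\le N$, the collapse $(kq/2)^{\omega(n)}=[d(n)]^{\log_2(kq/2)}$, and the invocation of (B) with $r'=\log_2(kq/2)$ all match. One gap: to get $\log\prod_{p\le N}(1+(kq/2)|a(p)|^2/p)=(kq/2)\lambda_a(N)+O_{k,q}(1)$ you must control $\sum_p|a(p)|^4/p^2$; the ingredients you cite ($a(p)\ll p^\theta$ with $\theta<1/2$ and $\lambda_a(x)\ll\log x$) give only $\sum_{p\le N}|a(p)|^4/p^2=O(\lambda_a(N))$, which would corrupt the constant in the exponent. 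The paper's Lemma~\ref{square} extracts $\sum_p|a(p)|^4/p^2<\infty$ from (B) via a dyadic-block argument ($\sum_{x/2<p\le x}|a(p)|^2/p=O(1)$ together with $|a(p)|^2/p\ll x^{2\theta-1}$ on such a block), and that is the result you actually need here. Your phrase ``$(kq/2)\lambda_a(N)(1+o(1))+O_k(1)$'' is not good enough as stated, since a multiplicative $o(1)$ error on $\lambda_a(N)$ ruins the conclusion.

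The upper bound has a genuine gap in the range $0<q<1$. You propose to use $\|D_N\|_q\le\|D_N\|_2$, but under (A) one has $\|D_N\|_2^2=\sum_{n\le N}|a(n)|^2/n\asymp e^{\lambda_a(N)}$, so this gives only $\|D_N\|_q\ll e^{\lambda_a(N)/2}$, not the claimed $e^{\lambda_a(N)/4}$. The paper instead uses the second part of Lemma~\ref{partial}: Helson's theorem gives $\|S_NF\|_q\le B_q\|F\|_1$ for $0<q<1$, so $\|D_N\|_q=\|S_NF_N\|_q\ll_q\|F_N\|_1\ll e^{\lambda_a(N)/4}$. Separately, your preliminary claim that $E_N-D_N$ ``is bounded in $L^2$ by a convergent Euler product'' is false; $\|E_N-D_N\|_2^2=\sum_{n>N,\,P^+(n)\le N}|a(n)|^2/n$ grows like $e^{\lambda_a(N)}$ in general (already for $a\equiv 1$), and in any case a triangle-inequality split of $D_N$ around $E_N$ cannot give the right exponent for $q<2$ because $\|E_N-D_N\|_q$ is not small. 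The paper's route $D_N=S_NF_N$ with a \emph{bounded projection}, rather than an additive comparison, is what makes the argument work. For $q>1$ your appeal to Helson matches the paper; for $q=1$ your $L\log L$ heuristic is in the right spirit, but the paper's actual proof is the concrete optimization $q=1+1/\lambda_a(N)$ together with the fact that $A_q\asymp(q-1)^{-1}$, which produces the factor $\lambda_a(N)$ cleanly. Finally, your error exponent $O(p^{-\min(3/2-3\theta,1)})$ in the local factor expansion is not summable over primes if it equals $p^{-1}$; the paper's exponent is $O(p^{-(2-4\theta)})$ with $2-4\theta>1$ when $\theta<1/4$, which is where hypothesis (A) is used.
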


%A brief discussion on the size of these constants will be presented in conjunction with the proof of this theorem.

In the distinguished case when $a(n)\equiv 1$ and $q$ is an even integer, a precise asymptotic expression for $\|D_N\|_q$ is known from the work of Conrey and Gamburd \cite{CG}. We do not reach this level of precision, but we would like to stress that the point of our Theorem~\ref{thm1} is that we have dispensed with Hilbert space methods and found the right order of magnitude of the norm $\| D_N\|_q$ for a continuous range of $q$.

We note that our bounds are consistent with conjectures for the $2k$th moment of a primitive $L$-function from the Selberg class. Indeed, Conjecture 2.5.4 of Conrey et al. \cite{cfkrs} states that for such an $L$-function $L(s)$,
\[\frac{1}{T}\int_0^T|L(\tfrac{1}{2}+it)|^{2k}dt\sim c_L(k)(\log T)^{k^2},\,\,\,\,\,\,\,\,k\in\mathbb{N}\]
for some constant $c_L(k)$. Our sharp asymptotic estimate $\|D_N\|_q\asymp (\log N)^{q/4}$ when $a(n)\equiv 1$ in the range $1<q<\infty$ is in line with this conjecture, as is our lower bound \eqref{below} for all $q>0$. Radziwi{\l\l} and
Soundararajan \cite{RS} have verified that the $2k$th moment of the Riemann zeta function is bounded below by $C_k(\log T)^{k^2}$ for real $k> 1$; Heath-Brown \cite{HB} obtained earlier the same result for all rational values of $k$. On the Riemann Hypothesis, the latter bound is known for all $k>0$ \cite{HB}. Harper~\cite{Har1}, building on work of Soundararajan~\cite{Sound1}, showed recently that the upper bounds of  optimal order $(\log T)^{k^2}$ also hold conditionally for all $k>0$. Finally, in upcoming work, Radziwi{\l\l} and Soundararajan \cite{RadziwillSoundararajan2} will establish unconditionally a bound for the correct order of magnitude for all fractional moments with $0 < k < 4$.

We will see in Section~\ref{proofzeta} below that the estimate from above in Theorem~\ref{thm1} is a fairly easy consequence of Helson's generalization of the M. Riesz theorem on the conjugation operator \cite{H1}, applied to certain finite Euler products.
%\[ G_N(s)=\prod_{j=1}^d (1-p_j^{-1/2-s})^{-1}, \quad  d=\pi(N).\]
As to the bound from below, we start by recalling the following interesting lower bound found by Helson \cite{H} (see also \cite[Theorem 6.5.9]{QQ}):
\begin{equation} \label{helson} \left(\sum_{n=1}^{N} |a_n|^2 |[d(n)]^{-1}\right)^{1/2}\le \| F\|_1, \end{equation}
where $d(n)$ denotes the divisor function. This inequality shows the relevance of the expression on the right-hand side of \eqref{normbelow} when $r=-1$, apart from the appearance of the M\"{o}bius function. Before explaining the role of the factor $|\mu(n)|$, we need to take a closer look at \eqref{helson}. This bound was obtained by a suitable iteration of the
inequality
\begin{equation} \label{HL} \left(\sum_{n=0}^{\infty} |c_n|^2 (n+1)^{-1}\right)^{1/2}\le \| f\|_{H^1(\T)}, \end{equation}
valid for $f(z)=\sum_{n=0}^{\infty} c_n z^n$  in the Hardy space $H^{1}(\T)$ of the unit circle (see Section~\ref{proofembed} for the definition of the spaces $H^q(\T)$). The latter result goes back to Carleman \cite{C} and has later been rediscovered by several authors (see  e.~g. \cite{Ma, V}). Here it is essential that the norm on the right-hand side is computed with respect to normalized Lebesgue measure on $\T$ and that the inequality is contractive. A noncontractive version of \eqref{HL} follows from the better known inequality
\[ \sum_{n=0}^{\infty} |c_n| (n+1)^{-1}\le \pi \| f\|_{H^1(\T)} \]
of Hardy and Littlewood \cite{HL1}. See also Hardy and Littlewood's paper \cite{HL} which contains an elaborate study of similar inequalities.

For the proof of \eqref{below}, we will use an $L^q$ version of \eqref{helson}, and this is what led us to condition (B). To be more precise, we need to keep a suitable weighted $\ell^2$-norm on the left-hand side and replace the $L^1$ norm by the $L^q$ norm on the right-hand side of the inequality. Our basic inequality  is based on the observation that this becomes a  manageable task if we sum only over square-free numbers:

 \begin{theorem}\label{embed}
Let $F(s)=\sum_{n=1}^{N} a_n n^{-s}$ be a Dirichlet polynomial. Then
\[ \left(\sum_{n=1}^{N} |\mu(n)| |a_n|^2 [d(n)]^{\frac{\log q}{\log 2}-1}\right)^{1/2}\le \| F\|_q \]
whenever $0<q\le 2$.
\end{theorem}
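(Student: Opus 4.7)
The plan is to lift the Dirichlet polynomial to the polytorus via the Bohr correspondence and induct on the number of variables, with Weissler's one-variable hypercontractive estimate and the reverse Minkowski inequality doing the heavy lifting. Let $d=\pi(N)$, enumerate the primes $p_1<\cdots<p_d$ not exceeding $N$, and write each $n$ as $n=p_1^{\alpha_1}\cdots p_d^{\alpha_d}$. The Bohr lift associates to $F$ the analytic polynomial $\tilde F(z_1,\ldots,z_d)=\sum_\alpha a_{n_\alpha}z^\alpha$ on $\T^d$, and satisfies $\|F\|_q=\|\tilde F\|_{L^q(\T^d)}$. A square-free $n$ corresponds to a multi-index $\alpha\in\{0,1\}^d$, and in that case $d(n_\alpha)=2^{|\alpha|}$, so $[d(n_\alpha)]^{(\log q)/\log 2-1}=(q/2)^{|\alpha|}$. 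The theorem is thus equivalent to
\[
\sum_{\alpha\in\{0,1\}^d}(q/2)^{|\alpha|}\bigl|\hat F(\alpha)\bigr|^{2}\le\|\tilde F\|_{L^q(\T^d)}^{2},\qquad 0<q\le 2.
\]

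The one-variable input is Weissler's contractive inequality: for $0<q\le 2$ and any analytic polynomial $f(z)=\sum_{n\ge 0}c_n z^n$, the dilation $f(z)\mapsto f(\sqrt{q/2}\,z)$ maps $H^q(\T)$ contractively into $H^2(\T)$, giving $\sum_{n\ge 0}(q/2)^{n}|c_n|^{2}\le\|f\|_{L^q(\T)}^{2}$. I would then proceed by induction on $d$, the base case $d=1$ being exactly the preceding inequality. For the inductive step, expand $\tilde F(z',z_d)=\sum_{j\ge 0}z_d^j G_j(z')$ with $z'=(z_1,\ldots,z_{d-1})$. Applying Weissler in the variable $z_d$ for each fixed $z'$ and retaining only the first two terms of the sum gives the pointwise inequality $|G_0(z')|^2+(q/2)|G_1(z')|^2\le\|\tilde F(z',\cdot)\|_{L^q(\T)}^2$; raising both sides to the $q/2$-th power and integrating over $\T^{d-1}$ yields
\[
\int_{\T^{d-1}}\bigl(|G_0|^2+(q/2)|G_1|^2\bigr)^{q/2}\,dz'\le\|\tilde F\|_{L^q(\T^d)}^{q}.
\]

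To convert this into the bound $\|G_0\|_{L^q(\T^{d-1})}^{2}+(q/2)\|G_1\|_{L^q(\T^{d-1})}^{2}\le\|\tilde F\|_{L^q(\T^d)}^{2}$ needed to close the induction, I would invoke the reverse Minkowski inequality $\|X+Y\|_{L^r}\ge\|X\|_{L^r}+\|Y\|_{L^r}$ for non-negative $X,Y$ and $0<r\le 1$, applied with $r=q/2$, $X=|G_0|^2$, and $Y=(q/2)|G_1|^2$. Reverse Minkowski is itself an immediate consequence of Jensen's inequality for the convex function $(x,y)\mapsto(x^{1/r}+y^{1/r})^{r}$ on $\mathbb{R}_{\ge 0}^{2}$. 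Since $\hat{G_0}(\alpha')=\hat{\tilde F}(\alpha',0)$ and $\hat{G_1}(\alpha')=\hat{\tilde F}(\alpha',1)$ for $\alpha'\in\{0,1\}^{d-1}$, applying the inductive hypothesis separately to $G_0$ and $G_1$ completes the argument. The main point, and what makes the induction go through, is the interplay between Weissler's sharp constant $q/2$ (which produces the exact divisor weight) and reverse Minkowski (which closes the dimension step despite the mismatch between the $L^q$ norm on the right and the weighted $\ell^2$ sum on the left); the restriction to square-free $n$ is precisely what allows Weissler's expansion to be truncated at its first two terms.
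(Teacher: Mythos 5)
Your proof is correct and follows essentially the same route as the paper's: Bohr lift to $\T^{\pi(N)}$, the sharp one-variable contractive estimate (you invoke Weissler directly, whereas the paper gives an elementary self-contained proof of the truncated version $|c_0|^2+\tfrac{q}{2}|c_1|^2\le\|f\|_{H^q(\T)}^2$ as Lemma~\ref{circlelem}, while noting Weissler as an alternative source), and an iteration over the prime variables. The only real structural difference is how the Minkowski step is packaged: the paper iterates forward, applying Lemma~\ref{circlelem} in $z_1,z_2,\ldots$ in turn and after each application invoking the integral form of Minkowski's inequality with exponent $r=2/q\ge 1$ to move the $L^q$-norm in the current variable to the outside; you instead induct on the dimension and, after the one-variable step, invoke reverse Minkowski with exponent $q/2\le 1$ applied to the two-term sum $|G_0|^2+\tfrac{q}{2}|G_1|^2$ to split it into $\|G_0\|_{L^q}^2+\tfrac{q}{2}\|G_1\|_{L^q}^2$ and recurse. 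Both Minkowski steps exploit the same underlying fact (the constraint $q\le 2$ makes the relevant exponent favorable), and both close the iteration identically; yours is a perfectly valid repackaging, and your Jensen-based justification of the reverse two-term Minkowski inequality is correct, since $(x,y)\mapsto(x^{1/r}+y^{1/r})^r$ is the $\ell^{1/r}$-norm on $\mathbb{R}_{\ge 0}^2$ and hence convex for $r\le 1$.
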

To avoid unnecessary technicalities, we have chosen to state Theorem~\ref{embed} only for Dirichlet polynomials, but the result extends painlessly to the Hardy spaces of Dirichlet series $\Hp$ for $q\ge 1$, which were defined by Bayart in \cite{Ba} as the closure of the set of Dirichlet polynomials with respect to our $L^q$ norm. We refer to Chapter 6 of the recent book \cite{QQ} for additional information about these spaces, which have been studied by many authors in recent years.  Following Helson's argument, we will prove Theorem~\ref{embed} by first establishing the analogous inequality
\begin{equation} \label{circle} \left(|f(0)|^2+\frac{q}{2}|f'(0)|^2\right)^{1/2} \leq \norm{f}_{H^q(\T)}, \end{equation}
which will be shown to be valid for functions $f$ in $H^q(\T)$ for $0<q\le 2$. For the sake of completeness, we have included the case $q=2$ in the statement of the theorem, although it is trivial in view of the identity $\| F\|_2=(\sum_{n=1}^N |a_n|^2)^{1/2}$.

The proof of Theorem~\ref{embed} is presented in the next section, while the  proof of Theorem~\ref{thm1} is given in Section~\ref{proofzeta}. Section~\ref{ramanujan} contains the proof that (ii) is satisfied whenever \eqref{selberg} holds. The brief final Section~\ref{final} contains a few concluding remarks.

\section{Proof of Theorem~\ref{embed}}\label{proofembed}

We begin by recalling that $H^q(\T)$, $0<q<\infty$, consists of all functions $f(z)=\sum_{n=0}^{\infty} c_n z^n$ analytic in the unit disc $|z|<1$ such that
\[ \| f \|_{H^q(\T)}^q:= \sup_{r<1} \frac{1}{2\pi} \int_{0}^{2\pi} |f(re^{it}|^q dt <\infty. \]
Functions in $H^q(\T)$ have radial limits at almost every point of $\T$, and $H^q(\T)$ can alternatively be defined as a closed subspace of $L^q(\T)$; when $q\ge 1$, this is the subspace of functions $f$ whose Fourier coefficients $\hat{f}(k)$ vanish when $k$ is negative. For $q=2$,
we have just
\[ \| f\|_{H^2(\T)}^2=\sum_{n=0}^{\infty}  |c_n|^2. \]
For additional information about $H^q(\T)$, we refer to the standard references \cite{D, G}.

We now give a self contained and elementary proof of the basic estimate \eqref{circle}, which also can be obtained from a general inequality of Weissler \cite[Corollary 2.1]{W}:

\begin{lemma}\label{circlelem}
For $q \in (0,2]$ and arbitrary $f$ in $H^q(\T)$, we have
\begin{equation} \label{HL}
	 \left(|f(0)|^2+\frac{q}{2}|f'(0)|^2\right)^{1/2} \leq \norm{f}_{H^q(\T)}.
\end{equation}
\end{lemma}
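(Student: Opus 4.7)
My plan is to factor out the zeros of $f$ via an inner-outer decomposition, establish the inequality for the resulting non-vanishing function by reducing to the trivial $H^2$ case, and finally lift the estimate back to $f$ using Schwarz--Pick on the inner factor.

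First, I would write $f=IO$ with $I$ inner and $O$ outer; since $|I|=1$ almost everywhere on $\T$, we have $\norm{O}_{H^q(\T)}=\norm{f}_{H^q(\T)}$, and $O$ is nowhere zero in $\D$. Because $\D$ is simply connected, $g=O^{q/2}$ is well defined and analytic, and the boundary identity $|g|^2=|O|^q$ shows $g\in H^2(\T)$ with $\norm{g}_{H^2(\T)}^2=\norm{f}_{H^q(\T)}^q$. After absorbing a unimodular constant I may assume $O(0)>0$. Combining the trivial estimate $|g(0)|^2+|g'(0)|^2\le \norm{g}_{H^2(\T)}^2$ with the explicit values $g(0)=O(0)^{q/2}$ and $g'(0)=(q/2)\,O(0)^{q/2-1}O'(0)$ yields
\[
O(0)^q+(q/2)^2 O(0)^{q-2}|O'(0)|^2\le \norm{f}_{H^q(\T)}^q.
\]
Dividing by $O(0)^q$, raising to the power $2/q\ge 1$, and invoking Bernoulli's inequality $(1+x)^{2/q}\ge 1+(2/q)x$ for $x\ge 0$ then give the intermediate estimate $|O(0)|^2+(q/2)|O'(0)|^2\le \norm{f}_{H^q(\T)}^2$.

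To finish, I need to pass from $O$ to $f=IO$. Setting $\alpha=I(0)$ and $\beta=I'(0)$, the inner function satisfies $|\beta|\le 1-|\alpha|^2$ by Schwarz--Pick. With $f(0)=\alpha O(0)$ and $f'(0)=\beta O(0)+\alpha O'(0)$, the desired bound becomes equivalent to the matrix inequality $T^{*}DT\le D$ with $D=\operatorname{diag}(1,q/2)$ and $T=\bigl(\begin{smallmatrix}\alpha & 0\\ \beta & \alpha\end{smallmatrix}\bigr)$. A short $2\times 2$ computation reduces this positive-semidefiniteness to $(q/2)|\beta|^2\le (1-|\alpha|^2)^2$, which is immediate from Schwarz--Pick combined with $q\le 2$.

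I expect the last matrix step to be the most delicate part of the argument, since it is precisely where the hypothesis $q\le 2$ and the sharp Schwarz--Pick bound on $|I'(0)|$ have to conspire exactly. The remaining ingredients, inner-outer factorization, the $H^2$ power trick, Parseval, and Bernoulli's inequality, are entirely classical. The only degeneracy worth flagging is the case $O(0)=0$, which cannot occur for nonzero $f\in H^q(\T)$, since the outer factor never vanishes at the origin.
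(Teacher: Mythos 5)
Your proof is correct, and it reaches the conclusion by a genuinely different route than the paper in the step that passes from the zero-free case to general $f$. Both proofs handle the zero-free part identically: raise to the power $q/2$ to land in $H^2$, apply Parseval, and finish with Bernoulli's inequality. But the paper then invokes the F.~Riesz factorization $f=Bg$ with $B$ a Blaschke product and $g$ zero-free, reduces to finite Blaschke products, and establishes the monotonicity inequality one Blaschke factor at a time by a direct algebraic computation with $b(z)=(z-w)/(1-\overline{w}z)$. You instead use the inner-outer factorization $f=IO$, absorb the whole inner factor in a single step, and encode the required monotonicity as the operator inequality $T^{*}DT\preceq D$ with $T=\bigl(\begin{smallmatrix}\alpha & 0\\ \beta & \alpha\end{smallmatrix}\bigr)$, $D=\operatorname{diag}(1,q/2)$, $\alpha=I(0)$, $\beta=I'(0)$. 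Your reduction of positive semidefiniteness to $(q/2)|\beta|^2\le(1-|\alpha|^2)^2$ is exactly the determinant condition (the diagonal conditions then follow from Schwarz--Pick and $q\le2$, which you might spell out for completeness), and Schwarz--Pick for inner functions, $|I'(0)|\le 1-|I(0)|^2$, together with $q\le 2$ closes the argument. Your route is conceptually tidier: it treats Blaschke and singular inner factors simultaneously, avoids the limiting argument implicit in the paper's reduction to \emph{finite} Blaschke products, and isolates exactly where the hypothesis $q\le 2$ enters. The paper's route is more elementary, since it needs nothing beyond a single explicit $2\times 2$ algebraic estimate with a Blaschke factor; in effect it proves the special case of your Schwarz--Pick matrix inequality where $I$ is a single Blaschke factor (for which Schwarz--Pick is an equality) and then iterates.
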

\begin{proof}
Assume first that $f$ has no zeros in $\D$ and normalize $f$ so that $f(0)=1$. Then its Taylor series at $0$ has the form
\[ f(z)=1+az+ O(z^2). \]
Since $f$ has no zeros, $f^{q/2}$ can be defined to be analytic in $\D$ with Taylor series
\[ [f(z)]^{q/2}=1+\frac{q}{2}az+O(z^2). \]
It follows that
\[ \|f\|_{H^q(\T)}=\|f^{q/2}\|_{H^2(\T)}^{2/q}\ge \left[1+\left(\frac{q}{2}|a|\right)^2\right]^{1/q}\ge \left(1+\frac{q}{2}|a|^2\right)^{1/2},\]
where we in the final step used Bernoulli's inequality.

By a classical theorem of F. Riesz \cite[p. 53]{G}, a general function in $H^q(\T)$ has the form $Bf$, where $B$ is a Blaschke product and $f$ has no zeros in $\D$ \cite{G}. Moreover, $\|Bf\|_{H^q(\T)}=\|f\|_{H^q(\T)}$. It is clearly enough to consider only finite Blaschke products. To conclude, it therefore suffices for us to show that
\begin{equation} \label{toprove} |(bf)(0)|^2+\frac{q}{2}|(bf)'(0)|^2 \le |f(0)|^2+\frac{q}{2}|f'(0)|^2 \end{equation}
whenever $f$ is analytic in a neighborhood of $0$ and $b(z)=(z-w)/(1-\overline{w}z)$ is a general Blaschke factor with $|w|<1$. The case when $f(0)=0$ is trivial and hence we may assume as above that $f(0)=1$ and $f'(0)=a$. Then
\begin{eqnarray*} |(bf)(0)|^2+\frac{q}{2}|(bf)'(0)|^2 &=& |w|^2+\frac{q}{2}|1-|w|^2-wa|^2 \\
&\le& |w|^2+\frac{q}{2}(1-|w|^2)^2+q|a| |w|(1-|w|^2)+\frac{q}{2}|a|^2|w|^2 \\
&= &  1+ \frac{q}{2}|a|^2-(1-|w|^2)\left(1+\frac{q}{2}|a|^2-\frac{q}{2}(1-|w|^2)-q|a| |w|\right).
\end{eqnarray*}
Now \eqref{toprove} follows because
\[ 1+\frac{q}{2}|a|^2-\frac{q}{2}(1-|w|^2)-q|a| |w|\ge 1-\frac{q}{2}|w|^2-\frac{q}{2}(1-|w|^2)=1-\frac{q}{2}\ge 0.\]
\end{proof}

We now prepare for Helson's iterative argument by transforming our problem into a problem on $\T^{\pi(N)}$; here and in the sequel  $\pi(x)$ denotes the prime counting function. Let the prime factorization of the positive integer $1\le n\le N$ be
\[ n=\prod_{j=1}^{\pi(N)} p_j^{\alpha_j}, \]
where $p_1=2$, $p_2=3$, ... are the primes listed in ascending order. This means that we may represent $n$ by a unique multi-index
\[ \alpha(n):=(\alpha_1,...,\alpha_{\pi(N)}). \]
By the Bohr correspondence, our polynomial $F$ lifts to a polynomial
\[ \mathcal{B}F(z):=\sum_{n=1}^{N} a_n z^{\alpha(n)}\]
on $\T^{\pi(N)}$. It will be convenient to write just $\mathcal{B}F(z)=\sum_{\alpha}b_\alpha z^{\alpha} $, where it is understood that $b_{\alpha(n)}=a_n$.
Let $m$ denote normalized Lebesgue measure on $\T^d$, where $1\le d\le \pi(N)$. We will suppress the dependence on $d$ and instead write $dm(z_1,...,z_d)$ when there is a need to signify the dimension. We will use the fact that
\begin{equation} \label{ergodic}
\|F\|_q^q=\int_{\T^{\pi(N)}}\left|\mathcal{B} F(z)\right|^{q} dm(z).\end{equation}
This identity can be obtained from the Birkhoff--Khinchin ergodic theorem; an elementary proof can be found in \cite[Section 3]{SS}.

We will use Fubini's theorem and will then need the following version of Minkowski's inequality.

\begin{lemma}\label{minkowski} Let $X$ and $Y$ be measure spaces and $g$ a measurable
function on $X\times Y$. For $1\le r<\infty$,
\[ \big(\int_X\big(\int_Y |g(x,y)|dy\big)^rdx\big)^{1/r}
\le \int_Y\big(\int_X |g(x,y)|^r dx\big )^{1/r} dy.\]
\end{lemma}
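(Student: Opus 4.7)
The plan is to give the standard two-step proof of Minkowski's integral inequality. The case $r=1$ is immediate from Tonelli's theorem, which in fact yields equality, so the real work is for $r>1$. I will use the duality/Hölder route, which is cleaner than trying to iterate a triangle inequality.

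Fix $r > 1$ and let $r' = r/(r-1)$ be the conjugate exponent. Set
\[ F(x) := \int_Y |g(x,y)|\,dy. \]
If the right-hand side of the asserted inequality is $+\infty$ the claim is vacuous, and if $\|F\|_r = 0$ it is trivial. Otherwise, after a truncation argument (replace $|g|$ by $\min(|g|,M)\mathbf{1}_{A_n}$ for an exhausting sequence of sets of finite measure and invoke monotone convergence at the end) I may assume that $\|F\|_r$ is finite and strictly positive.

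The main computation is the chain
\begin{align*}
\int_X F(x)^r\,dx &= \int_X F(x)^{r-1}\int_Y |g(x,y)|\,dy\,dx \\
&= \int_Y \int_X F(x)^{r-1}|g(x,y)|\,dx\,dy \\
&\le \int_Y \left(\int_X F(x)^{r}\,dx\right)^{1/r'}\left(\int_X |g(x,y)|^r\,dx\right)^{1/r}\,dy,
\end{align*}
where Tonelli's theorem justifies the swap of order of integration in the second line and Hölder's inequality (with exponents $r'$ and $r$ applied in the $x$ variable) gives the third. Factoring out $\|F\|_r^{r-1} = \|F\|_r^{r/r'}$ and dividing through by it (permissible by the reductions above) produces exactly
\[ \|F\|_r \le \int_Y \Big(\int_X |g(x,y)|^r\,dx\Big)^{1/r}\,dy, \]
which is the desired estimate.

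The only subtlety, and the sole place where care is needed, is legitimising the division by $\|F\|_r^{r-1}$ without assuming a priori that $\|F\|_r < \infty$; the truncation argument mentioned above handles this, but I do not expect any genuine obstacle since this is a textbook maneuver. Everything else is Fubini plus one application of Hölder.
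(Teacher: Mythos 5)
Your proof is correct: it is the standard Hölder-plus-Tonelli argument for Minkowski's integral inequality, and the truncation step properly handles the division by $\|F\|_r^{r-1}$. The paper itself offers no proof of this lemma — it is quoted as a classical fact — so there is no in-paper argument to compare against; your write-up is a perfectly serviceable reference proof.
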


\begin{proof}[Proof of Theorem~\ref{embed}]
For every $j=1,2,...$, we let $T_j$ be the operator defined on the set of polynomials by the following rule:
\[ T_j\left(\sum_{\alpha} b_{\alpha} z^{\alpha}\right):=\sum_{\alpha: \ \alpha_j\le 1} b_{\alpha} \left(\frac{q}{2}\right)^{\alpha_j/2} z^{\alpha}. \]
Hence $T_j f$ is linear in the variable $z_j$, and we obtain
\[ \mathcal{B}^{-1} T_{1}\cdots T_{\pi(N)} \mathcal B\left( \sum_{n\le N} a_n n^{-s}\right)=\sum_{n\le N} |\mu(n)| a_n [d(n)]^{\left(\frac{\log q}{\log 2}-1\right)/2} n^{-s}. \]
This means that we have
\[ S:=\left(\sum_{n=1}^{N} |\mu(n)| |a_n|^2 [d(n)]^{\frac{\log q}{\log 2}-1}\right)^{1/2}=
\left(\int_{\T_1}\cdots \int_{\T_{\pi(N)}} \left|T_{1}\cdots T_{\pi(N)} \mathcal BF(z)\right|^2 dm(z)\right)^{1/2}.\]
Using Fubini's theorem and applying Lemma~\ref{circlelem} in the variable $z_{1}$, we get
\[ S\le
\left(\int_{\T_2\times \cdots \times \T_{\pi(N)}} \left( \int_{\T_1} \left|T_2 \cdots T_{\pi(N)}\mathcal{B} F(z)\right|^{q} dm(z_1)\right)^{2/q} dm(z_2,...,z_{\pi(N)})
\right)^{1/2}.\]
In the next step we use Lemma~\ref{minkowski} with $r=2/q$ to get
\[ S\le
\left( \int_{\T_1}\left(\int_{\T_2\times \cdots \times \T_{\pi(N)}}\left|T_2 \cdots T_{\pi(N)}\mathcal{B} F(z)\right|^{2} dm(z_2,...,z_{\pi(N)})
 \right)^{1/2}  dm(z_1)\right)^{1/q}.\]
We now iterate this argument in each of the variables $z_2, ..., z_{\pi(N)}$. After $\pi(N)$ steps we reach the desired conclusion that
\[ \left(\sum_{n=1}^{N} |\mu(n)| |a_n|^2 [d(n)]^{\frac{\log q}{\log 2}-1}\right)^{1/2}\le
\left( \int_{\T^{\pi(N)}} |\mathcal{B} F(z)|^q dm(z)\right)^{1/q}=\|F\|_q, \]
where the last identity is \eqref{ergodic}.
\end{proof}

\section{Proof of Theorem~\ref{thm1}}\label{proofzeta}

Let $S_N$ denote the partial sum operator
\[ S_N\left(\sum_{n=1}^{\infty} a_n n^{-s} \right):=\sum_{n=1}^N a_n n^{-s}. \]
We consider it as an operator on $\Hp$ for $q\ge 1$, which we may define as  the closure of the set polynomials in the norm $\|\cdot\|_q$, as was done in \cite{Ba}. In \cite[Section 3]{AOS}, it is explained how the following lemma follows from a general result of Helson concerning compact Abelian groups whose dual is an ordered group \cite{H1}.  See also 8.7.2 and  8.7.6 of \cite{R}.

\begin{lemma}\label{partial}
For the partial sum operator $S_N$, we have the estimates
\[ \| S_N F\|_q\le \begin{cases} A_q \|F\|_q, &
1<q<\infty \\
 B_q \|F\|_1, & 0<q<1. \end{cases}\]
 for absolute constants $A_q$ and $B_q$.
\end{lemma}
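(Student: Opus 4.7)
The approach is to deduce Lemma~\ref{partial} from Helson's generalisation of the M.~Riesz theorem for compact abelian groups with ordered dual, exactly as advertised before the statement. The whole question reduces, via the Bohr correspondence, to a projection problem on $\mathbb{T}^{\pi(N)}$: a Dirichlet polynomial $F$ lifts to $\mathcal{B}F=\sum_n a_n z^{\alpha(n)}$, with $\|F\|_q=\|\mathcal{B}F\|_{L^q(\mathbb{T}^{\pi(N)})}$ by \eqref{ergodic}, and the partial sum $S_N F$ lifts to the truncation of $\mathcal{B}F$ to modes $z^{\alpha(n)}$ with $n\le N$. By density of Dirichlet polynomials in $\mathscr{H}^q$ (for $q\ge 1$) it suffices to prove the claimed inequalities for polynomials.

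The key step is to realise this truncation as a translate of a Riesz projection for a suitable ordering of the dual group $\mathbb{Z}^{\pi(N)}$. Define
\[ \alpha\preceq\beta \quad\Longleftrightarrow\quad \sum_{j=1}^{\pi(N)}\alpha_j\log p_j\le \sum_{j=1}^{\pi(N)}\beta_j\log p_j; \]
this is a bona fide translation-invariant total order because the logarithms of the primes are linearly independent over $\mathbb{Q}$, and by construction $\alpha(m)\preceq\alpha(n)$ exactly when $m\le n$. The truncation to $\{n\le N\}$ is therefore the projection of $\mathcal{B}F$ onto $\{\alpha:\alpha\preceq\alpha(N)\}$ (the coordinate-wise condition $\alpha\ge 0$ being automatic for a Dirichlet series). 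Twisting by the unimodular character $z^{-\alpha(N)}$, which is an $L^q$-isometry for every $q>0$, translates the cut-off to level zero and turns our operator into the Riesz projection $R$ onto the non-positive cone in $(\mathbb{Z}^{\pi(N)},\preceq)$; the original truncation is recovered by twisting back. Helson's theorem supplies a strong-type bound $\|R\|_{L^q\to L^q}\le A_q'$ for $1<q<\infty$, uniform in the underlying group, together with a companion weak-type $(1,1)$ bound, also uniform. The first of these yields $\|S_N F\|_q\le A_q\|F\|_q$ for $1<q<\infty$; the second, combined with the standard Kolmogorov distribution-function calculation (which converts weak-$L^1$ into $L^q$ for every $q<1$), yields $\|S_NF\|_q\le B_q\|F\|_1$ for $0<q<1$.

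The only real obstacle is the identification of $S_N$ with a Helson--Riesz projection, which hinges on introducing the logarithmic total order and on the $L^q$-isometric twist by $z^{-\alpha(N)}$ that places the cut-off at the origin of the order. Once this is done, uniformity of the constants in $N$ is automatic, since Helson's bounds depend only on the ambient compact abelian group and the ordering of its dual, not on the particular half-space being projected to. A slightly more detailed version of this argument is already sketched in Section~3 of \cite{AOS}, and the abstract statements appealed to are \cite[8.7.2 and 8.7.6]{R}.
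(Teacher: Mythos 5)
Your proposal is correct and reconstructs precisely the argument the paper alludes to: the paper gives no proof of its own, only the pointer to \cite[Section 3]{AOS} and to Rudin 8.7.2/8.7.6, and your Bohr lift, the logarithmic total order on $\mathbb{Z}^{\pi(N)}$, the $L^q$-isometric twist by $z^{-\alpha(N)}$, and the appeal to Helson's ordered-group M.~Riesz/Kolmogorov theorems are exactly what those citations unpack. You have also correctly identified the one point requiring care, namely that the projection onto the half-space $\{\alpha\preceq\alpha(N)\}$ agrees with $S_N$ only because the lift of a Dirichlet series already lives in the cone $\alpha\ge0$.
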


The constants $A_q$ and $B_q$ are universal in the sense that they do not depend on the group in question; as in the classical M. Riesz theorem on $\T$, they are both of magnitude $|q-1|^{-1}$ when $q$ is close to $1$.

\begin{proof}[Proof of the bound from above in Theorem~\ref{thm1}]
We introduce the function
\[ F_N(s):=\prod_{p\le N} \sum_{m=0}^{\infty}a(p^m)p^{-m/2-ms}. \]
Since $a(n)$ is a multiplicative function, we have $D_N=S_N F_N$. In view of Lemma~\ref{partial}, it is therefore enough to estimate
$\| F_N\|_q$.

The function $F_N$ is clearly in $\Hp$ because its Dirichlet series is absolutely convergent for $\Real s=\sigma\ge 0$. For the same reason, \eqref{ergodic} remains valid, and we therefore find that
\[ \|F_N\|_q=\prod_{p\le N}\big\| \sum_{m=0}^{\infty} a(p^m) p^{-m/2}z^m \big\|_{H^q(\T)}. \]
By our assumption on $a(n)$,
\begin{equation} \label{largem}
\big\| \sum_{m=4}^{\infty} a(p^m) p^{-m/2}z^m \big\|_{H^q(\T)}\ll \sum_{m=4}^{\infty} p^{-m(1/2-\theta)}\ll p^{-(2-4\theta)}.
\end{equation}
On the other hand, for sufficiently large $p$, we have
\begin{eqnarray*}
\big\| \sum_{m=0}^{3} a(p^m) p^{-m/2}z^m \big\|_{H^q(\T)} &
= & \big\| \big( \sum_{m=0}^{3} a(p^m) p^{-m/2}z^m\big)^{q/2} \big\|_{H^2(\T)}^{2/q} \\
& \le & \left(1+\frac{q^2|a(p)|^2}{4p}+O(p^{-(2-4\theta)}) \right)^{1/q} \\ & = & 1+\frac{q|a(p)|^2}{4p}+O(p^{-(2-4\theta)}).
\end{eqnarray*}
Combining this estimate with \eqref{largem} and using that $\theta<1/4$, we obtain
\[ \| F_N\|_q \ll e^{q\lambda_{a}(N)/4}. \]
We can conclude immediately from Lemma~\ref{partial} when $q\neq 1$. Setting $q=1+1/\lambda_a(N)$ and recalling that $A_q$ is of magnitude $|q-1|^{-1}$, we also get from Lemma~\ref{partial}
that
\[  \| D_N\|_1 \le  \| D_N\|_q \ll \lambda_{a}(N) \| F_N\|_q \ll \lambda_{a}(N) e^{\lambda_{a}(N)/4}. \]
\end{proof}

For the proof of the bound from below in Theorem~\ref{thm1}, we require the following simple consequence of condition (B).

\begin{lemma}\label{square}
If a multiplicative arithmetic function $a(n)$ satisfies $\operatorname{(B)}$, then
\[ \sum_{p} \frac{|a(p)|^4}{p^2}<\infty.\]
\end{lemma}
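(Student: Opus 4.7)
The plan is to combine condition (B) at two values of $r$ with a pointwise multiplicative identity in order to isolate $\sum_p|a(p)|^4/p^2$. Set $y_p := |a(p)|^2/p$; the growth bound $|a(p)|\ll p^\theta$ built into (B) gives $y_p\le Y$ uniformly and $y_p\to 0$ as $p\to\infty$. Writing $P_r := \prod_{p\le x}(1+2^r y_p)$ and $R_r := \sum_{n\le x,\,n\text{ squarefree}}|a(n)|^2 d(n)^r/n$, a direct expansion shows that $P_r$ is exactly $R_r$ with the restriction $n\le x$ removed; hence (B) reads $P_r\asymp_r R_r$. The algebraic identity $(1+y)^2 = (1+2y)\bigl(1+y^2/(1+2y)\bigr)$ multiplied over $p\le x$ becomes
\[
\prod_{p\le x}\Bigl(1+\frac{y_p^2}{1+2y_p}\Bigr) \;=\; \frac{P_0^{\,2}}{P_1}.
\]
Since $y_p^2/(1+2y_p)\asymp y_p^2$ uniformly under our boundedness of $y_p$, taking logarithms yields $\sum_{p\le x}y_p^2 \asymp \log(P_0^{\,2}/P_1)+O(1)$. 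Applying (B) at $r=0$ and $r=1$ then replaces $P_0^{\,2}/P_1$ by $R_0^{\,2}/R_1$ up to bounded constants, so the task reduces to showing that $R_0(x)^2/R_1(x)$ is bounded independently of $x$.

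I expect this last step to be the main obstacle. A naive Cauchy--Schwarz estimate $R_0^{\,2}\le R_{-1}R_1$ gives only $R_0^{\,2}/R_1 \le R_{-1}$, which can diverge with $x$; for instance, when $a\equiv 1$ one has $R_{-1}\asymp(\log x)^{1/2}$. To do better, I would expand $R_0(x)^2$ as a double sum over pairs $(m,n)$ of squarefree integers at most $x$ and decompose the sum according to $g=(m,n)$. The coprime contribution ($g=1$) is a double sum over coprime squarefree pairs, which after another application of (B) at $r=1$ matches $R_1(x)$ up to a constant. The $g>1$ contribution factorises as $\sum_{g>1,\text{ squarefree}}|a(g)|^4/g^2$ multiplied by an inner coprime sum restricted to primes not dividing $g$, which can again be controlled by (B).

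Combining these pieces should yield $R_0(x)^2 \ll R_1(x)$ uniformly, after which the identity above produces a finite uniform bound on $\sum_{p\le x}|a(p)|^4/p^2$ and hence $\sum_p|a(p)|^4/p^2<\infty$. The crucial input that keeps the last step from being circular is the growth hypothesis $|a(p)|\ll p^\theta$ with $\theta<1/2$, which provides the crude polynomial bounds on $R_0$, $R_{\pm 1}$ needed to close the multiplicative decomposition against the bootstrap introduced by the $g>1$ terms.
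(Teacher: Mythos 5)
Your opening moves are sound: the trivial containment $R_r\le P_r$ plus condition (B) give $P_r\asymp_r R_r$, and the identity $(1+y)^2=(1+2y)\bigl(1+y^2/(1+2y)\bigr)$ correctly yields $\log(P_0^2/P_1)\asymp\sum_{p\le x}y_p^2$ once you know $y_p$ is bounded. But the reduction to ``show $R_0(x)^2/R_1(x)=O(1)$'' is not a reduction at all; since $R_0^2/R_1\asymp P_0^2/P_1$ and $\log(P_0^2/P_1)\asymp\sum_{p\le x}y_p^2$, the boundedness of $R_0^2/R_1$ is \emph{equivalent} to the statement being proved. Your gcd decomposition makes the circularity explicit: grouping $R_0(x)^2=\sum_{m,n\le x}|\mu(m)\mu(n)|\,|a(m)a(n)|^2/(mn)$ by $g=\gcd(m,n)$ factors out $\sum_{g\ \mathrm{sqfree}}|a(g)|^4/g^2=\prod_p\bigl(1+|a(p)|^4/p^2\bigr)$ in front of an inner coprime sum comparable to $R_1$, so the decomposition just returns $R_0^2/R_1\asymp\prod_p(1+|a(p)|^4/p^2)$, which is bounded if and only if $\sum_p|a(p)|^4/p^2<\infty$. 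The appeal to $|a(p)|\ll p^\theta$ at the end does not break this loop: that bound only forces $y_p\to 0$ (each factor near $1$), and says nothing about the convergence of $\sum y_p^2$, since one can have $y_p\to 0$ arbitrarily slowly.

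The paper closes the argument with a different, genuinely effective idea: compare (B) at $r=0$ across the dyadic scales $x/2$ and $x$. The key observation is combinatorial: a squarefree $n\le x$ having a prime factor $p>x/2$ must satisfy $n=p$, so
\[
\sum_{n\le x}|\mu(n)|\,\frac{|a(n)|^2}{n}\;\le\;\prod_{p\le x/2}\Bigl(1+\frac{|a(p)|^2}{p}\Bigr)\;+\;\sum_{x/2<p\le x}\frac{|a(p)|^2}{p}.
\]
Feeding this into \eqref{normbelow} with $r=0$, and using that $\prod_{p\le x}(1+b(p))\ge\prod_{p\le x/2}(1+b(p))\cdot\bigl(1+\sum_{x/2<p\le x}b(p)\bigr)$, one extracts the uniform bound $\sum_{x/2<p\le x}|a(p)|^2/p=O(1)$. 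This is the step your scheme has no analogue of: it derives a new, nontrivial estimate from (B) rather than reparametrising the target. The hypothesis $|a(p)|\ll p^\theta$ with $\theta<1/2$ is then used exactly where it is needed, namely to convert an $\ell^1$ bound on a dyadic block into an $\ell^2$ bound: $\max_{x/2<p\le x}|a(p)|^2/p\ll x^{2\theta-1}$, so $\sum_{x/2<p\le x}|a(p)|^4/p^2\ll x^{2\theta-1}\sum_{x/2<p\le x}|a(p)|^2/p\ll x^{2\theta-1}$, and summing over $x=2^j$ converges because $2\theta-1<0$. To repair your proof you would need an ingredient of this kind; the algebraic manipulations with $P_0,P_1,R_0,R_1$ alone cannot produce it.
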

\begin{proof}
We set $b(n):=|a(n)|^2/n.$ Then
\[ \prod_{p\le x/2} (1+ b(p))\ge \sum_{n\le x} |\mu(n)| b(n) - \sum_{x/2<p\le x} b(p). \]
In view of \eqref{normbelow}, this implies that there exists a positive constant $C$ such that
\[ \prod_{p\le x/2} (1+ b(p))\gg \prod_{p\le x} (1+ b(p)) - C \sum_{x/2<p\le x} b(p)\ge\sum_{x/2<p\le x}b(p) \cdot \left(\prod_{p\le x/2} (1+ b(p))-C\right). \]
It follows that $\sum_{x/2<p\le x}b(p)=O(1)$. Now our additional assumption from (B) that $a(p)\ll p^{\theta}$ gives the desired conclusion.
\end{proof}

We note that the relation $\sum_{x/2<p\le x}|a(p)|^2/p=O(1)$, obtained above as a consequence of \eqref{normbelow}, implies the growth condition \eqref{conseq}.

\begin{proof}[Proof of the bound from below in Theorem~\ref{thm1}]
In the range $0<q<2$, we use Theorem~\ref{embed} and set $r=\frac{\log q}{2}-1$ in condition (B). We then obtain
\[ \| D_N\|_q \gg \prod_{p\le N} \left(1+\frac{q}{2}|a(p)|^2p^{-1}\right)^{1/2}\ge \prod_{p\le N} \left(1+|a(p)|^2 p^{-1}\right)^{q/4}=e^{q\lambda_{a}(N)/4+O(1)},\]
where we in the last step used Lemma~\ref{square}.

To deal with the remaining case $q\ge 2$, we write
\[ [D_N(s)]^{k}=\sum_{n=1}^{N^k} a_{k,N}(n) n^{-1/2-s} \]
with
\[a_{k,N}(n):=\sum_{\substack{n_1\cdots n_k=n\\n_i\leq N}}a(n_1)\cdots a(n_k).\]
We pick $j\ge 1$ such that $2^j\le q<2^{j+1}$. We then apply  Lemma~\ref{embed} to $D_N^{2^j}$ and use that \[|\mu(n)| a_{2^j,N}(n)=%|\mu(n)| %a(n)d_{2^j}(n)=
|\mu(n)| a(n) [d(n)]^j\] when $n\le N$ to obtain
\begin{eqnarray*} \| D_N\|_q^{2^j} = \| D_N^{2^j}\|_{q2^{-j}}& \ge & \left(\sum_{n=1}^{N} |\mu(n)| |a(n)|^2 n^{-1} [d(n)]^{2j+\frac{\log (q2^{-j})}{\log 2}-1}\right)^{1/2}\\
& = & \left(\sum_{n=1}^{N}
|\mu(n)||a(n)|^2 n^{-1} [d(n)]^{\frac{\log q}{\log 2}+j-1}\right)^{1/2}. \end{eqnarray*}
We now set $r=\frac{\log q}{2}+j-1$ in condition (B) and act as in the preceding case $0<q<2$.

\end{proof}
%Combining this with the second part of Theorem~\ref{embed}, we get
%\[ \left(\sum_{n=1}^{N}
%|\mu(n)| n^{-1} [d(n)]^{\frac{\log q}{\log 2}+j-1}\right)^{2^{-j-1}}\le \|F_N\|_p\le \left(\sum_{n=1}^{N} n^{-1} [d(n)]^{\frac{\log q}{\log %2}-1}\right)^{1/2}. \]

\section{The case $\lambda_a(x)=c\log\log x +O(1)$}\label{ramanujan}

We turn to the following positive result regarding our condition (B).
\begin{theorem}\label{rama}
Suppose $a(n)$ is a multiplicative arithmetic function satisfying
\begin{equation}\label{prime sum}  \lambda_{a}(x)=c \log\log x + O(1) \end{equation}
for some positive constant $c$.
Then part \eqref{normbelow} of condition $\operatorname{(B)}$ holds.
\end{theorem}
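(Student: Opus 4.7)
Write $g$ for the multiplicative function supported on square-free integers with $g(p)=2^r|a(p)|^2/p$, so that (using $[d(n)]^r=2^{r\omega(n)}$ for square-free $n$) the right-hand side of \eqref{normbelow} equals $B(x):=\sum_{n\le x}|\mu(n)|g(n)$, while the left-hand side equals
\[
P(x):=\prod_{p\le x}(1+g(p))=\sum_{\substack{n\ \text{square-free}\\ p|n\Rightarrow p\le x}}g(n).
\]
The plan is to show that the extra mass in $P(x)$ coming from square-free $n>x$ with $p|n\Rightarrow p\le x$ costs only a factor $O_r(1)$. I will introduce a truncation parameter $M=M(r)\gtrsim 2^r$, set $y:=x^{1/M}$, and split the task into two estimates: first, compare $P(x)$ with the smaller Euler product $P(y)$; second, show that imposing $n\le x$ inside $P(y)$ loses at most a factor of $2$.

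For the first estimate, the hypothesis $\lambda_a(x)=c\log\log x+O(1)$ yields $\sum_{y<p\le x}g(p)=2^rc\log(\log x/\log y)+O(2^r)=A\log M+O_r(1)$, where $A:=2^rc$, and hence
\[
\frac{P(x)}{P(y)}\le\exp\Bigl(\sum_{y<p\le x}g(p)\Bigr)\ll_r M^A.
\]
A further partial summation, starting again from the hypothesis, gives the Chebyshev-type bound $\sum_{p\le y}|a(p)|^2(\log p)/p=c\log y+O(\log y)$, and hence $\sum_{p\le y}g(p)\log p\le C_r\log y$ for a constant $C_r$ depending only on $r$ and $c$. This last estimate is the key quantitative input for the next step.

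For the second estimate, note that $B(x)\ge P(y)-T(x,y)$, where
\[
T(x,y):=\sum_{\substack{n>x,\ n\ \text{square-free}\\ p|n\Rightarrow p\le y}}g(n).
\]
Apply Rankin's trick with $\alpha:=1/\log y=M/\log x$. Since $p^\alpha\le e$ for $p\le y$, the elementary inequality $p^\alpha-1\le(e-1)\alpha\log p$ and a factorization of the Euler product give
\[
T(x,y)\le x^{-\alpha}\prod_{p\le y}(1+g(p)p^\alpha)\le e^{-M}P(y)\exp\Bigl((e-1)\alpha\sum_{p\le y}g(p)\log p\Bigr)\le e^{-M+C'_r}P(y),
\]
with $C'_r:=(e-1)C_r$. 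Choosing $M=M(r)$ large enough that $e^{-M+C'_r}\le 1/2$ then yields $B(x)\ge\tfrac12 P(y)$.

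Combining the two estimates gives $P(x)\ll_r M^AP(y)\ll_r M^AB(x)$, which is \eqref{normbelow}. The main technical point is the calibration of $M$: because the Rankin loss $e^{C'_r}$ depends on $r$ both through the factor $2^r$ in $g(p)$ and through the implied constants from partial summation, one must take $M$ of order $2^r$; the resulting $M^A=M^{2^rc}$ is finite (though it grows with $r$), producing the $\ll_r$ dependence claimed in \eqref{normbelow}.
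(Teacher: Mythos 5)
Your proof is correct, and it takes a genuinely different and more elementary route than the paper. The paper factors out $\zeta(s)^{c2^r}$ from the Dirichlet series $F_r(s)=\sum_n|\mu(n)||a(n)|^2[d(n)]^r n^{-s}$ and then invokes a Sathe--Selberg (Selberg--Delange) type theorem to obtain the asymptotic $\sum_{n\le x}|\mu(n)||a(n)|^2[d(n)]^rn^{-1}\sim G_r(1)\,[\log x]^{c2^r}/\Gamma(c2^r+1)$, which it compares with the trivial Euler-product bound $\exp(2^r\lambda_a(x))\ll[\log x]^{c2^r}$. You instead stay entirely within elementary sieve territory: truncate the Euler product at $y=x^{1/M}$, which by the hypothesis costs only a factor $M^{c2^r}$, and then apply Rankin's trick with $\alpha=1/\log y$ together with the partial-summation estimate $\sum_{p\le y}|a(p)|^2\log p/p\ll\log y$ to show the $y$-smooth square-free tail beyond $x$ is at most $\tfrac12 P(y)$ once $M\gg 2^r$. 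Your approach has two virtues over the paper's: it is self-contained (no appeal to a tauberian or contour-integral theorem), and it avoids the slightly delicate claim that $G_r(s)$ is analytic in a full neighborhood of $\sigma=1$, which strictly speaking requires a bit more than the hypothesis $\lambda_a(x)=c\log\log x+O(1)$. What the paper's route buys, and yours does not, is the precise asymptotic constant $G_r(1)/\Gamma(c2^r+1)$, which the authors go on to analyze as $r\to\infty$ in \eqref{exp decay}. Two small polish points: state explicitly that for $x<2^M$ the inequality holds trivially with an $r$-dependent constant (your partial-summation and $\log\log$ manipulations implicitly need $y\ge 2$), and rephrase the "Chebyshev-type bound" as simply $\sum_{p\le y}|a(p)|^2\log p/p\ll\log y$, since writing it as $c\log y + O(\log y)$ obscures that the error term is of the same order as the main term.
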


\begin{proof}%Clearly, the restriction on $\lambda_a(x)$ implies that the set of $p$ for which $|a(p)|>Cp^{1/2}$ is finite in number.
Let $r$ be real and consider the Dirichlet series
\begin{equation}F_r(s):=\sum_{n=1}^\infty\frac{|\mu(n)||a(n)|^2[d(n)]^r}{n^s}=\prod_p \left(1+2^r\frac{|a(p)|^2}{p^s}\right).
\end{equation}
Upon factoring out the zeta function we see that
\[F_r(s)=\zeta(s)^{c2^r}G_r(s),\]
where
\[G_r(s):=\prod_p \bigg(1-\frac{1}{p^s}\bigg)^{c2^r} \left(1+2^r\frac{|a(p)|^2}{p^s}\right)=\prod_p \left(1+2^r\frac{|a(p)|^2-c}{p^{s}}+O\big(p^{-2\sigma}\big)\right).\]
It follows from \eqref{prime sum} that $G_r(s)$ is analytic in a neighborhood of $\sigma=1$. The usual methods (e.g. Theorem 2 of \cite{S}) applied to $F_r(s+1)$ now give
\begin{equation}\sum_{n\leq x}|\mu(n)||a(n)|^2[d(n)]^rn^{-1}=\frac{G_r(1)}{\Gamma(c2^r+1)}[\log x]^{c2^r}+O\big([\log x]^{c2^r-1}\big).
\end{equation}
On the other hand,
\[\prod_{p\leq x}\left(1+2^r\frac{|a(p)|^2}{p}\right)\leq \exp\Big(2^r\lambda_\alpha(x)\Big)\ll [\log x]^{c2^r},\]
and the so the result follows.
\end{proof}

It is of interest to note that, under the assumption \eqref{prime sum}, we can determine the asymptotic behavior of
\[G_r(1)=\prod_p \bigg(1-\frac{1}{p}\bigg)^{c2^r} \left(1+2^r\frac{|a(p)|^2}{p}\right)\]
when $r$ is large.
Indeed,
\begin{equation}\label{est 1}
\begin{split}\prod_{p>c2^r} \bigg(1-\frac{1}{p}\bigg)^{c2^r} \left(1+2^r\frac{|a(p)|^2}{p}\right)=&\prod_{p>c2^r} \left(1+2^r\frac{|a(p)|^2-c}{p}+O\big(|a(p)|^2(2^r/p)^2\big)\right)\\
\ll&\exp\left(\sum_{p>c2^r}\bigg[2^r\frac{|a(p)|^2-c}{p}+O\big(|a(p)|^2(2^r/p)^2\big)\bigg]\right)
\\=&\exp(o(2^r)),
\end{split}
\end{equation}
where  we have used \eqref{prime sum} and partial summation for the sum of the `big O' term. By Mertens' third theorem, we have
\begin{equation}\label{est 2}\prod_{p\leq c2^r}\bigg(1-\frac{1}{p}\bigg)^{c2^r}\sim\left(\frac{e^{-\gamma}}{\log(c2^r)}\right)^{c2^r}.
\end{equation}
For the final product we first note that 
\[\sum_{p\leq x}\frac{|a(p)|}{p^{1/2}}\ll  \sqrt{\pi(x)\lambda_a(x)}\ll  \sqrt{\frac{x\log\log x}{\log x}}\]
by the Cauchy--Schwarz inequality along with the prime number theorem and  \eqref{prime sum}. 
This gives
\begin{equation}\label{est 3}
\begin{split}\prod_{p\leq c2^r}\left(1+2^r\frac{|a(p)|^2}{p}\right)\leq \prod_{p\leq c2^r}\left(1+2^{r/2}\frac{|a(p)|}{p^{1/2}}\right)^2\leq \exp\left(2\cdot 2^{r/2}\sum_{p\leq c2^r}\frac{|a(p)|}{p^{1/2}}\right)=\exp\big(o(2^r)\big).
\end{split}
\end{equation}
On combining \eqref{est 1}, \eqref{est 2}, and \eqref{est 3} we see that
\begin{equation}\label{exp decay}G_r(1)=\exp\big[-c2^r\big(\log r+O(1)\big)\big].
\end{equation}

\section{Final remarks}\label{final}

By keeping track of the constant in our upper bound for $\|D_N\|_q $, we see that it grows super-exponentially with $q$. However, from \eqref{exp decay} we see that the constant in our lower bound is of super-exponential decay.  We believe that the latter behavior is the true order of growth. This conjecture is supported by the result of Conrey and Gamburd \cite{CG} in the distinguished case $a(n)\equiv 1$, stating that for $k\in\mathbb{N}$,
\[\lim_{T\to\infty}\frac{1}{T}\int_0^T \Big|\sum_{n\leq N}n^{-1/2-it}\Big|^{2k}\sim \alpha_k\gamma_k(\log N)^{k^2},\]
where $\alpha_k$ is an arithmetic factor similar to $G_r(1)$, and $\gamma_k$ is the volume of a particular convex polytope. Since the latter quantity is at least bounded, we see that these constants share the same behavior as those in our lower bound.

The picture changes when $q\to 1$. Then the constant in Helson's version of the M. Riesz theorem is the one that leads to the blow-up of our estimate. It remains an interesting problem to determine the precise order of growth in the range $0< q \le 1$.

As we noted earlier, our results are in line with the conjectures for moments of  primitive $L$-functions from the Selberg class. Under certain orthogonality conditions on the coefficients (e.g.  (1.13) of \cite{S2}), our methods should extend to products of Dirichlet polynomials. We expect the resultant bounds on the norm to remain consistent with the analogous conjectures for moments of non-primitive $L$-functions \cite{He,Mil}.

Finally, we close the paper with some additional remarks pertaining to Theorem~\ref{embed}. 
A natural question is whether the M\"{o}bius function is really needed in our inequality when $q\neq 1$. For sufficiently small $q$, this is indeed so, as can be seen from the size of the Taylor coefficients of the function $(1-z)^{-1/(2q)}$.  In the range $1<q<2$, we do not know, but here it is of interest to note that a standard interpolation argument gives the inequality
\[ \left(\sum_{n=0}^{\infty} |c_n|^2 (n+1)^{1-2/q}\right)^{1/2}\le C_q \| f\|_{H^q(\T)} \]
for some constant $C_q$. However, since $1-2/q>\frac{\log q}{\log 2}-1$ when $1<q<2$ and the exponent in Theorem~\ref{embed} can not be improved, it is clear that\footnote{In \cite[p. 2693]{O}, it is claimed that $C_q=1$ for $1<q<2$, but this is unfortunately not correct.} $C_q>1$.

The problem raised in the preceding paragraph is to find the largest exponent $\gamma=\gamma(p)$ for which the contractive inequality
\[ \left(\sum_{n=0}^{\infty} |c_n|^2 (n+1)^{\gamma}\right)^{1/2}\le \| f\|_{H^q(\T)}\]
holds for every $f$ in $H^q(\T)$, $0<q<2$. We note that $\gamma(q)$ exists for every $0<q<2$ because a result of Burbea \cite{B} implies that
$\gamma(2/\ell)\ge 1-\ell$ for $\ell=2,3,...$.  But this result shows also that there is a considerable gap between the known upper and lower bounds for $\gamma(q)$, and it remains an interesting problem to estimate this quantity more precisely for $0<q<2$, $q\neq 1$.

A similar question appears for $q>2$ because it is known from \cite[Lemma 8]{Se} that
\eqref{helson} is reversed when $q=2^j$ for a positive integer $j$:
\begin{equation}\label{upper} \left(\sum_{n=1}^{N} |a_n|^2 [d(n)]^{j-1}\right)^{1/2}\ge \| F\|_{2^j}. \end{equation}
In \cite{Se}, a variant of the Riesz--Thorin interpolation method was used to obtain a similar inequality in the range $2<q<4$, but with a power of $d(n)$ larger than $\frac{\log q}{\log 2}-1$. It seems reasonable to conjecture that \eqref{upper} should hold whenever $j$ is a real number larger than $1$. If such a result could be established, we could use it to obtain the bound in \eqref{above} for $q>2$ and thus obtain the conjectured behavior of the implied constant when $q\to \infty$.

\section*{Acknowledgement} The authors are grateful to Herv\'{e} Queff\'{e}lec for drawing their attention to Weissler's paper \cite{W} and also to Titus Hilberdink, Chris Hughes, Eero Saksman, and the anonymous referee for helpful comments.

\end{document}